\numberwithin{equation}{section}
\newtheorem{thm}{Theorem}
\newtheorem*{thm*}{Theorem}
\newtheorem{question}{Question}
\newtheorem*{prop*}{Proposition}
\newtheorem{lem}[thm]{Lemma}
\newtheorem{cor}[thm]{Corollary}
\newtheorem*{cor*}{Corollary}
\newtheorem*{iproblem*}{Problem}
\theoremstyle{definition}
\newtheorem{defi}[thm]{Definition}
\newtheorem*{defi*}{Definition}
\newtheorem*{rem*}{Remark}
\newtheorem*{warn*}{Warning}
\newtheorem*{com*}{Comment}
\newcommand{\bR}{\mathbb{R}}
\newcommand{\bN}{\mathbb{N}}
\newcommand{\cF}{\mathcal{F}}
\newcommand{\cI}{\mathcal{I}}
\newcommand{\cJ}{\mathcal{J}}
\newcommand{\cO}{\mathcal{O}}
\newcommand{\om}{\omega}
\newcommand{\ep}{\varepsilon}
\newcommand{\act}{\curvearrowright}
\title[]{On fixed point property for $L_p$-representations of Kazhdan groups}
\author[]{Alan Czuron}
\address{Alan Czuron\\ University of Warsaw, Poland}
\email{alanczuron@gmail.com}
\author[]{Mehrdad Kalantar}
\address{Mehrdad Kalantar, University of Houston, USA}
\email{mkalantar@uh.edu}
\thanks{A.C. was partially supported by the National Science Centre, Poland, grant no. 2015/18/E/ST1/0021. M.K. was partially supported by the NSF Grant DMS-1700259.}
\begin{document}

\begin{abstract}
Let $G$ be a topological group with finite Kazhdan set, let $\Omega$ be a standard Borel space and $\mu$ a finite measure on $\Omega$. We prove that for any $p\in [1, \infty)$, any affine isometric action $G \act L_p(\Omega, \mu)$ whose linear part arises from an ergodic measure-preserving action $G \act (\Omega, \mu)$, has a fixed point.
\end{abstract}

\maketitle

\thispagestyle{empty}

\section{Introduction}

Kazhdan's property (T) is a fundamental concept in structure theory of groups with many significant applications in various areas of group theory such as ergodic theory, operator algebras, representation theory, etc. 

This notion often appears as a tool in proving rigidity properties of groups and their associated structures.

A closely related property is Serre's property $(\cF H)$, the fixed point property for affine isometric actions on  Hilbert spaces.
In fact these properties are equivalent for all locally compact $\sigma$-compact groups, and Property $(T)$ implies $(\cF H)$ for every topological group \cite{Del, Gui}.

Both properties above are defined for Hilbert space representations, and both have generalizations to similar properties for any given Banach space. The systematic study of these properties and their applications was initiated in the work of Bader-Furman-Gelander-Monod \cite{BFGM}.
 
\bigskip

Let $G$ be a topological group. By an isometric linear action $\pi : G \act X$ of $G$ on a Banach space $X$ we mean a strongly continuous group homomorphism $\pi:G \to \cO(X)$, where $\cO(X)$ is the group of all invertible isometric linear maps on $X$. We say that $\pi$ almost have invariant vectors if $\,\inf_{\|x\|=1} \text{diam} (\pi(K) x ) = 0\,$,
for all compact subsets $K\subset G$.
Denote by $X^{\pi}$ the subspace of all $\pi(G)$-invariant vectors in $X$.

The group $G$ is said to have Property $T_X$ (\cite[Definition 1.1]{BFGM}) if for any isometric linear action $\pi : G \act X$, the quotient action $\tilde{\pi} : G \act X/X^{\pi} $ does not almost have $G$-invariant vectors.  
Then $G$ has Kazhdan's Property $ (T)$ iff it has $T_H$ for every Hilbert space $H$.

The group $G$ is said to have Property $\cF_X$ (\cite[Definition 1.2]{BFGM}) if any isometric affine action $\rho : G \act X$ has fixed points. This is equivalent to vanishing of the first cohomology $H^1(G, \pi)$ of $G$ with $\pi$-coefficients, where $\pi$ is the linear part of $\rho$. Recall that $H^1(G, \pi)$ is defined to be the quotient group $Z^1(\pi)/B^1(\pi)$, where $Z^1(G, \pi)$ is the group of all $\pi$-cocycles and $B^1(\pi)$ is the subgroup of $Z^1(G, \pi)$ consisting of all $\pi$-coboundaries.

\bigskip

It was proved in \cite[Theorem 1.3]{BFGM} that for any locally compact second countable (l.c.s.c.) group $G$, property $\cF_X$ implies $T_X$ for any Banach space $X$. The converse is known to not be true in general. 

We refer the reader to \cite{BFGM}, and references therein, for more details on all these concepts.

\bigskip

One of the most important cases, both in theory and applications, which is also the main interest of this paper, is the above properties for $X$ being the $L_p$-space of a $\sigma$-finite measure space $(\Omega,\mu)$.

By \cite[Theorem A. and Theorem 1.3]{BFGM} any l.c.s.c. group $G$ with property (T) has $T_{L_p(\Omega, \mu)}$ for any $1\leq p <\infty$ and has $\cF_{L_p(\Omega, \mu)}$ for $1< p \le 2$ where $\mu$ is a $\sigma$-finite measure on a standard Borel space $\Omega$.
In fact $(T)$ implies $(\cF_{ L_p})$ with $1\le p\le 2$ for general topological groups \cite{MarSal}.
On the other hand, there are many property (T) groups (even discrete) that do no not have $\cF_{\ell_p}$ for large $p$ (\cite{BoPa, Yu, Nica1}).

Thus, the fixed point properties $\cF_{L_p}$ are in general strictly stronger than property (T), and groups admitting $\cF_{L_p}$ often posses strong rigidity properties. Higher rank semi-simple Lie groups have $\cF_{L_p}$ for all $p\in (1, \infty)$ (\cite[Theorem B]{BFGM}), while rank one groups in general fail to have $\cF_{L_p}$ for large $p$ (see e.g. \cite{Pa}, \cite{CTV}).

But failing to have $\cF_{L_p}$ only implies the existence of certain linear isometric actions $\pi$ of $G$ on an $L_p$-space with non-vanishing first cohomology.
But then this would naturally lead to interesting and important questions of whether there are subclasses of linear isometric $G$-actions on $L_p$-spaces whose first cohomology vanishes, or more generally, can one compute the cohomology of a given such representation?

We recall that by Banach--Lamperti's theorem \cite{Banach, Lamperti}, for $1\leq p<\infty$, $p\neq 2$, every linear isometric action $\pi$ of a group $G$ on $L_p(\Omega,\mu)$ induces a measure-class preserving action of $G$ on the measure space $(\Omega,\mu)$ (which we refer to as the corresponding BL action) such that
\[
\pi_\gamma (f)(\om) = \chi^{(\gamma)}(\om)f_\gamma(\om) \big[\frac{d\gamma \mu }{d\mu}(\omega)\big]^{\frac{1}{p}} 
\]
for all $f\in L_p(\Omega,\mu)$, where $f_\gamma(\cdot)= f(\gamma^{-1}\,\cdot)$ is the left translation of $f$ by $\gamma$, and $\chi^{(\gamma)}$ is a measurable function on $\Omega$ with $|\chi^{(\gamma)}|\overset{{\rm a.e.}}{=} 1$, for every $\gamma\in G$.

In particular, an isometric action $\pi:G \act L_p(\Omega,\mu)$, where $1<p<\infty$ and $p\neq 2$, induces isometric actions $\pi^q:G \act L_q(\Omega,\mu)$ for every $q\in [1,\infty)$ in the natural way. (For the connection between property $\cF_{L_p}$ for various $p\in [1, \infty)$, see e.g. \cite{LO, Alan, MarSal}).

From ergodic theoretical point of view, probability measure preserving (p.m.p.) ergodic actions of a group $G$ are of particular interest and importance, and their properties often reveal significant information about the structure of the group $G$ itself. 
In fact, property (T) and all $T_{L_p}$, $p\in [1, \infty)$, are determined only by the Koopman representations of ergodic p.m.p. actions on the standard Lebesgue space.

Thus, the following question is natural. Say $G$ has $\cF_{L_p}^0$ for $p\in [1, \infty)$, $p\neq 2$, if $H^1(G, \pi) = 0$ for any linear isometric action $G \act L_p(\Omega, \mu)$ whose BL action $G \act (\Omega,\mu)$ is ergodic and measure preserving, where $\Omega$ is a standard Borel space, and $\mu$ is a finite measure on $\Omega$. For $p=2$, $\cF_{L_2}^0$ is defined similarly for unitary representations arising from such actions $G \act (\Omega,\mu)$.

\begin{question}\label{question}{\cite[Question 29]{LO}}
Does every topological group $G$ with property $(T)$ have $\cF_{L_p}^0$ for every $p\in [1, \infty)$?
\end{question}

In \cite[Theorem 3]{LO} authors answer this question in the affirmative in the case of countable discrete groups. Our main result generalizes that to a large class of Kazhdan groups.

\begin{thm}\label{Main}
Let $G$ be a topological group with a finite Kazhdan set. Then $G$ has $\cF_{L_p}^0$ for every $p\in [1, \infty)$.
\end{thm}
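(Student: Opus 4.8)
The plan is to treat the range $p>2$ by a new argument, while for $1\le p\le 2$ the stronger property $\cF_{L_p}$ already holds for every property $(T)$ group by \cite{MarSal}, and a finite Kazhdan set certainly implies $(T)$. So fix $p>2$. By Banach--Lamperti write the given isometric action as $\pi^{(p)}_\gamma f=\chi^{(\gamma)}\,(f\circ\gamma^{-1})$ with $|\chi^{(\gamma)}|=1$ (the Radon--Nikodym factor is trivial since the action is measure preserving); the same cocycle $\chi$ defines isometric actions $\pi^{(q)}$ on $L_q(\Omega,\mu)$ for every $q$, in particular a unitary action $\pi^{(2)}$ on $L_2(\Omega,\mu)$. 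Let $V$ be the space of invariant vectors; by ergodicity $V$ is at most one--dimensional, and since $\mu$ is finite it is the same subspace for every $q$, with a $\pi$--invariant complement $L^0_q=\ker P_V$. Given $b\in Z^1(G,\pi^{(p)})$, I would first discard its $V$--component: it is a continuous homomorphism $G\to V$, hence zero because $(T)$ groups admit no nontrivial continuous homomorphism into $(\bC,+)$. Thus $b$ takes values in $L^0_p\subset L^0_2$ (the inclusion holds as $\mu$ is finite and $p>2$), so $b$ is simultaneously a cocycle for $\pi^{(2)}$.

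The engine of the proof is the averaging operator $A=\frac1{|Q|}\sum_{q\in Q}\pi_q$ attached to the finite (symmetric) Kazhdan set $Q$. On $L^0_2$ property $(T)$ gives a spectral gap, so $A$ is self--adjoint with $\|A\|_{L^0_2\to L^0_2}<1$ and $I-A$ is invertible; moreover $(T)$ yields $H^1(G,\pi^{(2)})=0$, producing $\xi_2\in L^0_2$ with $\xi_2-\pi^{(2)}_\gamma\xi_2=b(\gamma)$ for all $\gamma$. On $L^0_p$ I would instead establish a contraction directly: transporting the $L_2$ spectral gap through the equivariant Mazur map $L_p\to L_2$ and invoking the uniform convexity of $L_p$ (this is the mechanism behind $T_{L_p}$ in \cite{BFGM}) shows that $\pi^{(p)}$ has no almost invariant vectors in $L^0_p$, which in turn forces $\|A\|_{L^0_p\to L^0_p}<1$. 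Indeed, if $\|Af_n\|_p\to\|f_n\|_p=1$ then the unit vectors $\pi_q f_n$ have an average of norm tending to $1$, so uniform convexity makes them asymptotically equal, whence $f_n$ becomes almost invariant under $Q^{-1}Q$, a contradiction. Hence $I-A$ is invertible on $L^0_p$ through its Neumann series.

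The decisive step is to match the two solutions. Averaging the relation $\xi_2-\pi_\gamma\xi_2=b(\gamma)$ over $q\in Q$ gives $(I-A)\xi_2=\bar b$ with $\bar b=\frac1{|Q|}\sum_{q\in Q}b(q)\in L^0_p$. Define $\xi_p:=(I-A)^{-1}\bar b\in L^0_p$ via the $L_p$--Neumann series. Both $\xi_2$ and $\xi_p$ lie in $L^0_2$ and solve the single equation $(I-A)\xi=\bar b$; since $I-A$ is injective on $L^0_2$ they coincide, so $\xi_2=\xi_p\in L^0_p$. As $\xi_2$ now lies in $L_p$, the two actions agree on it and the identity $\xi_2-\pi^{(2)}_\gamma\xi_2=b(\gamma)$ reads $\xi_2-\pi^{(p)}_\gamma\xi_2=b(\gamma)$; thus $b$ is an $L_p$--coboundary and $H^1(G,\pi^{(p)})=0$.

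I expect the only real obstacle to be exactly what this matching circumvents: a priori the $L_2$--primitive $\xi_2$ need not lie in $L_p$, and its $L_2$--size gives no control on its $L_p$--size, so the spectral gap in $L_2$ alone produces a fixed point only in the larger space $L_2$. What rescues the argument is that the \emph{same} averaged equation $(I-A)\xi=\bar b$ is invertible in both $L^0_2$ and $L^0_p$, pinning the two primitives together. The points needing care are therefore the contraction $\|A\|_{L^0_p}<1$ (where uniform convexity and the finiteness of $Q$ are both used) and the bookkeeping for a nontrivial Banach--Lamperti cocycle $\chi$ and invariant line $V$; the latter one may streamline by multiplying with a unimodular invariant function to reduce to the Koopman case $\chi\equiv1$, where $V=\bC\,\boldsymbol{1}$ and $L^0_q$ is the mean--zero subspace.
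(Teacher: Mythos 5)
Your argument appears to be correct, but it is a genuinely different --- and substantially shorter --- route than the one taken in the paper. You reduce everything to showing that the $L_2$-primitive $\xi_2$ of the cocycle lies in $L_p$, and you get this from the single averaged equation $(I-A)\xi_2=\bar b$ with $A=\frac1{|Q|}\sum_{q\in Q}\pi_q$: since $\bar b$ is a \emph{finite} average of cocycle values it lies in $L_p$, and $I-A$ is invertible on the mean-zero parts of both $L_2$ and $L_p$ by Neumann series, with the two inverses agreeing because $L_p$-convergence implies $L_2$-convergence on a finite measure space. The paper instead bootstraps in two stages: Theorem \ref{thm:wLp} shows, for \emph{any} property (T) group, that the $L_2$-primitive lies in weak-$L_p$ (a distribution-function argument), and Lemma \ref{lem:main} then upgrades weak-$L_p$ to $L_p$ using the finite Kazhdan set via a Calder\'on--Zygmund-type splitting $f=\psi^{(z)}+\zeta^{(z)}$ and delicate level-set estimates. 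What your approach buys is brevity and a transparent explanation of where finiteness of the Kazhdan set enters (the averaging operator must be a finite sum, and the uniform-convexity estimate for $\|A\|_{L_p^0}<1$ degenerates as $|Q|\to\infty$); what the paper's approach buys is the intermediate weak-$L_p$ statement for general Kazhdan groups, which is presented as a result of independent interest.

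Three points in your write-up deserve care. First, the contraction $\|A\|_{L_p^0\to L_p^0}<1$ requires the spectral gap on $L_p^0$ \emph{with respect to the specific finite set $Q$} (enlarged to contain $e$ and be symmetric), not merely with respect to some compact set as in the abstract statement of $T_{L_p}$; the Mazur-map transfer does deliver this, since the map $M_{p,2}(f)=f|f|^{p/2-1}$ is equivariant (here measure-preservation of the BL action is essential) and uniformly continuous between unit spheres, so that $Q$-almost-invariant unit vectors in $L_p^0$ would produce $Q$-almost-invariant unit vectors in $L_2$ converging to unimodular constants, contradicting mean-zero-ness --- but this should be written out rather than only gestured at. Second, your proposed reduction to the Koopman case by multiplying with a unimodular invariant function is only available when the twisted representation has a nonzero invariant vector (i.e.\ when the Banach--Lamperti cocycle $\chi$ is a coboundary); when $V=0$ no such reduction exists, but then the spectral gap holds on all of $L_2$ and $L_p$ and the argument is simpler, so both cases are covered by your $L_q^0=\ker P_V$ formulation. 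Third, the step discarding the $V$-component of $b$ via triviality of continuous homomorphisms $G\to(\bC,+)$ is actually unnecessary: once $\xi_2$ is chosen in $L_2^0$, the identity $b(\gamma)=\xi_2-\pi_\gamma\xi_2$ forces $P_Vb(\gamma)=0$ automatically.
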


Of course all discrete property $(T)$ groups have finite Kazhdan sets. But, the class of non-discrete property (T) groups with finite Kazhdan sets is also quite large. This property was studied in \cite{Shalom1, Bekka, Pes}.
In \cite[Theorem 7]{Bekka} Bekka proved that a locally compact property (T) group $G$ admits a finite Kazhdan set iff its Bohr compactification $bG$ admits a finite Kazhdan set. In particular, every minimally almost periodic group with property (T) has a finite Kazhdan set. This includes all semisimple Lie groups with no compact factors. Thus, we cover the important case of rank one such Lie groups. For instance, the simple Lie groups ${\rm Sp}_{n, 1}(\bR)$ have property (T) but fail to have $\cF_{L_p}$ for $p>4n+2$ \cite{Pa, CTV}. Our Theorem \ref{Main} yields:

\begin{cor}\label{cor:spn}
Let $G={\rm Sp}_{n, 1}(\bR)$, let $(\Omega,\mu)$ be a standard Borel probability space, and let $p\in [1, \infty)$, $p\neq 2$. Suppose that $G \act L_p(\Omega,\mu)$ is a linear isometric action whose BL action $G \act (\Omega,\mu)$ is ergodic and measure preserving. Then $H^1(G, \pi^q) = 0$ for all $q\in [1, \infty)$.
\end{cor}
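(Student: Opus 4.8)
The plan is to obtain this as a direct specialization of Theorem \ref{Main}: once its hypothesis is verified for $G = {\rm Sp}_{n,1}(\bR)$, the conclusion follows by checking that every exponent $q$ is covered. The first step is therefore to confirm that $G$ has a finite Kazhdan set. As already noted in the discussion preceding the statement, $G$ is a non-compact simple Lie group with property $(T)$, and such a group is minimally almost periodic: any finite-dimensional unitary representation has relatively compact image, which forces it to factor through a compact quotient of $G$, and since a non-compact simple Lie group admits no nontrivial compact quotient, the representation must be trivial. Thus the Bohr compactification $bG$ is trivial, so by Bekka's criterion \cite[Theorem 7]{Bekka} the group $G$ admits a finite Kazhdan set. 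Theorem \ref{Main} then applies and yields that $G$ has $\cF_{L_q}^0$ for every $q \in [1,\infty)$.

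The second step is to see that the cohomological vanishing holds simultaneously for all the actions $\pi^q$. For this I would use the Banach--Lamperti normal form recalled in the introduction. The given action $\pi = \pi^p$ is encoded by its BL action $G \act (\Omega,\mu)$ together with a unimodular cocycle $\chi^{(\cdot)}$; since this BL action is measure preserving, the Radon--Nikodym factor $\big[\frac{d\gamma\mu}{d\mu}\big]^{1/p}$ is almost everywhere equal to $1$. Hence each induced action is given by $\pi^q_\gamma(f)(\om) = \chi^{(\gamma)}(\om)\,f_\gamma(\om)$, a formula independent of $q$, so that all of the $\pi^q$ share one and the same BL action, namely the given ergodic, measure preserving action $G \act (\Omega,\mu)$.

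It then remains to invoke $\cF_{L_q}^0$ for each $q$ separately. For $q \in [1,\infty)$ with $q \neq 2$, the action $\pi^q$ is a linear isometric $G$-action on $L_q(\Omega,\mu)$ whose BL action is ergodic and measure preserving, so the definition of $\cF_{L_q}^0$ applies verbatim and gives $H^1(G,\pi^q) = 0$. For $q = 2$ the same conclusion follows from the unitary-representation clause in the definition of $\cF_{L_2}^0$, since $\pi^2$ is the $\chi$-twisted Koopman representation of the ergodic measure preserving action $G \act (\Omega,\mu)$. Since the corollary is essentially a readout of Theorem \ref{Main}, I anticipate no analytic obstacle in the argument itself; the genuinely substantive inputs are external, namely the rank-one property $(T)$ of ${\rm Sp}_{n,1}(\bR)$ and Bekka's finite-Kazhdan-set criterion, both of which are cited rather than reproved.
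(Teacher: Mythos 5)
Your proposal is correct and follows essentially the same route as the paper, which derives the corollary exactly as you do: ${\rm Sp}_{n,1}(\bR)$ is a property (T) semisimple Lie group without compact factors, hence minimally almost periodic, hence has a finite Kazhdan set by Bekka's criterion, so Theorem \ref{Main} applies to each induced action $\pi^q$ (all of which share the same ergodic measure-preserving BL action). Your additional remarks on the Banach--Lamperti normal form and the $q=2$ case just make explicit what the paper leaves implicit.
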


\bigskip

The proof of Theorem \ref{Main} is considerably more technical compared to the discrete case, and requires some new methods. Specifically, a key new tool in our arguments is the use of weak-$L_p$ estimates, which is to some extent inspired by the proof of Marcinkiewicz interpolation theorem.

In fact, we prove the following result on vanishing of the first cohomology in the weak $L_p$ sense for  property (T) groups (without requiring existence of finite Kazhdan sets).

\begin{thm}\label{thm:wLp}
Let $G$ be a topological group with property (T), let $\Omega$ be a standard Borel space, $\mu$ a finite measure on $\Omega$, and let $p\in [1,\infty)$, $p\neq 2$. Suppose that $\pi: G\act L_p(\Omega,\mu)$ is a linear isometric action such that its BL action $G \act (\Omega,\mu)$ is ergodic and measure preserving. Then for any $\pi$-cocycle $b : G\rightarrow L_p(\Omega,\mu)$ there is $f\in L_{p,w}(\Omega,\mu)$ such that $ b_\gamma =f_\gamma - f$, for all $\gamma\in G$.
\end{thm}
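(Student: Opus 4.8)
The plan is to treat the ranges $1\le p<2$ and $p>2$ separately. For $1\le p<2$ the statement is immediate from known results: by \cite{MarSal}, property (T) already yields $\cF_{L_p}$ for every $1\le p\le 2$, so $b$ is a genuine $L_p$-coboundary, and since $\mu$ is finite we have $L_p(\Omega,\mu)\subset L_{p,w}(\Omega,\mu)$, giving a primitive in $L_{p,w}$. So the real content is the range $p>2$, which I now outline. Throughout, ``$f_\gamma-f$'' is read as the $\pi$-coboundary $\pi_\gamma f-f$; note that $\pi_\gamma$ preserves the distribution of $|\cdot|$ and hence extends isometrically to $L_{p,w}(\Omega,\mu)$, so the asserted identity makes sense there.

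Fix $p>2$. Because $\mu$ is finite, $L_p(\Omega,\mu)\subset L_2(\Omega,\mu)$ with $\|\cdot\|_2\le\mu(\Omega)^{1/2-1/p}\|\cdot\|_p$; this same inequality shows that the $L_p$-continuity of $b$ upgrades to $L_2$-continuity, so $b$ is also a strongly continuous $\pi$-cocycle with values in the Hilbert space $L_2(\Omega,\mu)$, where $\pi$ acts isometrically via the measure-preserving Banach--Lamperti formula $\pi_\gamma f=\chi^{(\gamma)}\,f(\gamma^{-1}\,\cdot)$. Since property (T) implies $(\cF H)$ for every topological group, the associated affine $L_2$-action has a fixed point; equivalently there is $h\in L_2(\Omega,\mu)$ with $b_\gamma=\pi_\gamma h-h$ for all $\gamma$. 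It therefore suffices to prove that this $h$ (up to a $\pi$-invariant vector) lies in $L_{p,w}(\Omega,\mu)$, i.e.\ that $m(t):=\mu(\{|h|>t\})\lesssim t^{-p}$ for large $t$.

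The quantitative input is the spectral gap furnished by $T_{L_2}$: applied to $\pi$ on $L_2$ it yields a compact set $Q\subset G$ and $\kappa>0$ with $\max_{\gamma\in Q}\|\pi_\gamma\xi-\xi\|_2\ge\kappa\|\xi-P\xi\|_2$ for all $\xi$, where $P$ is the orthogonal projection onto the $\pi$-invariants. Ergodicity of the measure-preserving action forces every $\pi$-invariant vector to have constant modulus, so this invariant subspace is at most one-dimensional and $P$ has a bounded generator; moreover $M:=\sup_{\gamma\in Q}\|b_\gamma\|_p<\infty$ by continuity of $b$ and compactness of $Q$. The key construction is a soft truncation: for $t>0$ set $\tau_t(h)=(|h|-t)_+\,h/|h|\in L_2$. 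Since $\tau_t$ is a $1$-Lipschitz, rotation-equivariant self-map of $\mathbb{C}$ commuting with each $\pi_\gamma$, one gets the pointwise bound $|\pi_\gamma\tau_t(h)-\tau_t(h)|=|\tau_t(\pi_\gamma h)-\tau_t(h)|\le|b_\gamma|$, supported in $\{|h|>t\}\cup\gamma\{|h|>t\}$. Applying H\"older's inequality on these two sets of measure $m(t)$ then gives the clean displacement estimate $\|\pi_\gamma\tau_t(h)-\tau_t(h)\|_2\le\sqrt2\,M\,m(t)^{1/2-1/p}$ for every $\gamma\in Q$.

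Feeding this into the spectral gap, and using $\|\tau_t(h)\|_2\ge t\,m(2t)^{1/2}$ together with the elementary bound $\|P\tau_t(h)\|_2\lesssim\|h\|_2\,m(t)^{1/2}$ for the at most one-dimensional invariant part, I obtain for all large $t$ a distributional inequality $t\,m(2t)^{1/2}\lesssim m(t)^{1/2-1/p}$, i.e.\ $m(2t)\lesssim t^{-2}m(t)^{1-2/p}$. Because $1-2/p<1$ this recursion is contracting: iterating along the dyadic scales $t_k=2^kt_0$ with the ansatz $m(t_k)\lesssim t_k^{-p}$ closes, giving $m(t)\lesssim t^{-p}$ for large $t$ and hence, with $h\in L_2$ controlling the small-$t$ range, $h\in L_{p,w}(\Omega,\mu)$. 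I expect the main obstacle to be precisely this passage from a single-scale almost-invariance estimate to the global tail bound: the naive sharp truncation $h\mathbf{1}_{\{|h|>t\}}$ generates uncontrolled near-threshold boundary terms, and the whole scheme hinges on replacing it by the soft truncation $\tau_t$ (whose contractivity annihilates those terms) and then running the Marcinkiewicz-style dyadic iteration; the remaining technical points are uniformity over $Q$ and the bookkeeping of the invariant projection.
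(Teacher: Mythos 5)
Your argument is correct, and it takes a genuinely different route from the paper's. Both proofs share the same skeleton: reduce to $p>2$, use property (FH) to produce an $L_2$-primitive $f$ of the cocycle, and then play the spectral gap from property (T) against the finiteness of $M=\sup_{\gamma\in S}\|b_\gamma\|_{p}$ over a compact Kazhdan set. From there the paper argues by \emph{contradiction}: assuming $f\notin L_{p,w}$, it works only with measures of superlevel sets $I_x^{\infty}=\{f\ge x\}$, proves the Chebyshev-type displacement bound $\sup_{\gamma\in S}\mu(\gamma^{-1}I_x^{\infty}\cap I_0^{y})\le M^p/(x-y)^p$ (with $M$ measured in the weak-$L_p$ quasinorm), and then runs a delicate selection along the geometric scales $x_i=N_0\beta^i$ to locate a single level at which both the ``far'' displacement and the ``near-threshold annulus'' are each at most $\frac{\varepsilon}{4}\mu(I_x^{\infty})$, contradicting the spectral gap applied to that superlevel set. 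You instead give a \emph{direct} bootstrap: the soft truncation $\tau_t$ (which is $1$-Lipschitz, being the identity minus the nearest-point projection onto the ball of radius $t$, and which commutes with $\pi_\gamma$ in the measure-preserving Banach--Lamperti form) converts the $L_p$-control of $b_\gamma$ into the $L_2$-displacement bound $\sqrt{2}\,M\,m(t)^{1/2-1/p}$ via H\"older on a set of measure at most $2m(t)$; the spectral gap (with the invariant part spanned by a function of constant modulus, by ergodicity, so that its contribution $O(m(t)^{1/2})$ is absorbed once $m(t)\le 1$) then yields $m(2t)\lesssim t^{-2}m(t)^{1-2/p}$, and the exponent identity $2+p(1-2/p)=p$ makes $t^{-p}$ the exact fixed point of this contracting recursion, so the dyadic induction closes with any constant $A\ge C^p2^{p^2/2}$ dominating the base case. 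I checked the steps that carry the weight --- the Lipschitz property of $\tau_t$, its equivariance, the support and H\"older estimates, and the closing of the iteration --- and they are all sound. Your route buys an explicit bound on $\|f\|_{p,w}$ in terms of $M$, the Kazhdan constant and $\|f\|_2$, and replaces the contradiction/selection combinatorics by a transparent Marcinkiewicz-style self-improvement; the paper's route avoids truncation machinery entirely and stays at the level of level-set measures, which is the form of argument it then reuses and refines in the proof of Lemma \ref{lem:main}.
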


The above theorem is interesting on its own, suggesting a notion of ``weak fixed point property'' which is some step towards weak $L_p$ cohomology theory suggested in \cite{BoSa}.

We remark that the exclusion of the case $p=2$ in both Corollary \ref{cor:spn}  and Theorem \ref{thm:wLp} has no significance (any property (T) group has $\cF_{L_2}$), and the only reason for adding it is that not every $\pi: G\act L_2(\Omega,\mu)$ arises from an action $G\act (\Omega,\mu)$. In fact, since property (T) implies $\cF_{L_p}$ for every $p\in [1,2]$ (see \cite[Theorem A. and Theorem 1.3]{BFGM}, \cite[Theorem 2]{LO}, and \cite[Corollary 2]{MarSal}), Question \ref{question} only remains for $p>2$.


\section{Fixed points in weak-$L_p$ spaces}
In this section we prove Theorem \ref{thm:wLp}.
But first, let us quickly review the basic relevant definitions and facts about these spaces. 

\subsection{Weak $L_p$ spaces}
Let $f:(\Omega,\mu) \rightarrow \mathbb{R}$ be a measurable function.
The distribution function $\lambda_f:\mathbb{R}_{+} \rightarrow [0 \,,\, \mu(\Omega)]$ of $f$ is defined by
\[
\lambda_f(x) = \mu (\left\{\, \omega \in \Omega : |f(\om)|>x \,\right\})\,.
\]
\begin{defi}
The \emph{weak-$L_p$ norm} of the measurable function $f$ is defined by
\[
\|\,f\,\|_{p,w}\,=\, \sup_{x>0} \,[x \,\lambda_f(x)^{\frac{1}{p}}]\,.
\]
We define the \emph{weak-$L_p$ space} of $(\Omega,\mu)$ to be 
\[
L_{p,w}(\Omega,\mu) \,:=\, \{f \,:\,  \|\,f\,\|_{p,w}<\infty \} \,.
\]
\end{defi}
Note that $\|\,\cdot\,\|_{p,w}$ is not truly a norm as it does not satisfy the triangle inequality. 
Obviously, $\|\,f\,\|_{p,w}\leq \|\,f\,\|_p$ for any $f\in L_p(\Omega,\mu)$, and therefore $L_p(\Omega,\mu) \subset L_{p,w}(\Omega,\mu)$. Moreover it is known that for every $\ep>0$ we have $L_{p,w}(\Omega,\mu) \subset L_{p-\ep} (\Omega,\mu)$.

Moreover, for $f\in L_1(\Omega,\mu)$ and $p\in [1,\infty)$, using Fubini's theorem we have
\begin{equation}\label{FirstLemma}
\begin{split}
p\int_{0}^{\infty} x^{p-1}\lambda_f(x)\,dx &=
p\int_{0}^{\infty}x^{p-1} \mu( \om\in \Omega : |f(\om)|>x  ) \,dx \\&=
p\int_{0}^{\infty} x^{p-1} \int_{\Omega} \chi_{|f|>x} (\om)\,d\mu(\om)\,dx \\&=
\int_{\Omega} \int_{0}^{|f(\om)|} px^{p-1}\,dx\, d\mu(\om) \\&=
\int_{\Omega} |f(\om)|^p \,d\mu(\om) \,=\, 
\|\,f\,\|_p^p \,.
\end{split}
\end{equation}

\bigskip

Before getting to the proof of Theorem \ref{thm:wLp}, we record the following simple fact which will be used frequently throughout the paper. Let $S$ be a symmetric Kazhdan set for a property (T) group $G$. If $G\act (\Omega,\mu)$ is an ergodic measure preserving action on a finite measure space $(\Omega,\mu)$, then there is $\ep>0$ such that 
\begin{equation}\label{SpectralGap}
\sup_{\gamma\in S} \mu(\gamma A - A)> \ep \mu(A) ~~~~~ \text{for all measurable } A\subset \Omega \text{ with } \mu(A)<\frac{1}{2} .
\end{equation}
In fact, since $S$ is a Kazhdan set, there is $\epsilon'>0$ such that $\sup_{\gamma\in S} \|f_\gamma - f \|_2^2 > \epsilon'  \| f \|_{2}^2$ for all $f\in L_2(\Omega,\mu)$ with $\int f d\mu =0$. Applying this to the function $f=\mathds{1}_{A} - \mu(A)\mathds{1}_{\Omega}$ yields \eqref{SpectralGap}.

\bigskip 

\noindent
{\it Proof of Theorem \ref{thm:wLp}.} 
Since $G$ has property (T), it has $\cF_{L_p(\Omega, \mu)}$ for every $1\leq p\leq 2$ by \cite{Del, Gui, MarSal}. Thus, we may assume $p>2$. As $L_p(\Omega,\mu)\subset L_2(\Omega,\mu)$, there exists $f\in L_2(\Omega,\mu)$ such that $b_\gamma = f_\gamma - f$ for all $\gamma\in G$. We will show $f\in L_{p,w}(\Omega,\mu)$. For the sake of contradiction, assume otherwise. For $a<b\in \mathbb{R}\cup\{\infty\}$, denote $I_{a}^{b} = f^{-1}\big([a, b)\big)$. Since $\mu$ is a finite measure there exists  $N_0> 0$  such that $\mu(I_{N_0}^{\infty})<\frac{1}{2}$.
Let $S$ be a compact symmetric Kazhdan set for $G$. By \eqref{SpectralGap} there exists $\ep>0$ such that for all $x> N_0$,
\begin{equation}\label{JSEquation} 
\sup_{\gamma\in S} \mu(s^{-1} I_{x}^{\infty} \cap I_{0}^{x}) \ge \ep \mu ( I_{  x }^{\infty}) \,.
\end{equation}
Set $M= \sup_{\gamma\in S} \|f_\gamma - f\|_{L_{p,w}}$. Since $f_\gamma - f\in L_p(\Omega,\mu)$ and $\|f_\gamma - f\|_{{p,w}}\leq \|f_\gamma - f\|_{p}$ for all $\gamma\in G$,
it follows that the map $\gamma\mapsto \|f_\gamma - f\|_{L_{p,w}}$ is continuous on $G$, and in particular $M< \infty$.

For any $x> N_0$ and $y< x$, we have
\begin{equation}\label{calcs2}
	    \sup_{\gamma\in S} \mu\left(s^{-1} I_{x}^{\infty}  \cap I_0^{y} \right)
	    \le 
	    \sup_{\gamma\in S} \lambda_{\pi^2(s) f - f}
	    (x - y ) \le 
	    \frac{M^p}{(x-y)^p} \,.
\end{equation}
Let $\beta = (1+\frac\ep4)^{\frac1p}$. For $x\in \mathbb{R}$ denote $C_x = x\lambda_f (x)^{\frac{1}{p}}$. 
If  $x> N_0$ is such that
	    \begin{equation}\label{ConditionL1}
	    C_x \ge \frac{M\beta}{(\frac\ep4)^{\frac{1}{p}}(\beta - 1)}\,,
	    \end{equation}
then for $\displaystyle y=\frac{x}{\beta}$ in \eqref{calcs2} we get
	    \begin{equation}\label{L1Inequality}
	    \sup_{\gamma\in S} \mu\left(s^{-1}  I_{x}^{\infty} \cap I_0^{y}\right)
	    \le \frac{M^p}{(x- y)^p} = \frac{M^p}{x^p (1 - \frac{1}{\beta})^p} 
	    \le \frac{\ep \,C_x^p}{4x^p} 
	    = \frac\ep4 \,\mu( I_x^{\infty}) \,.
	    \end{equation}
On the other hand, since $f \notin L_{p,w}(\Omega,\mu)$, there exists a sequence of natural numbers $n_k\nearrow \infty$ such that $C_{n_k} \nearrow \infty$. We may assume without loss of generality that $n_1>N_0$. For each $i\in \mathbb{N}$ let $x_i  = N_0 \beta^i$. Observe that if $x_{i_k} \le n_k \le x_{i_{k} +1}$ for $k, i_k \in \mathbb{N}$, then
\[
\frac{C_{x_{i_k}}}{x_{i_k}}  = \lambda_f (x_{i_k})^{\frac{1}{p}}\ge \lambda_f ({n_k})^{\frac{1}{p}}=\frac{C_{n_k}}{n_k} \geq \frac{C_{n_k}}{x_{i_{k}+1}} = \frac{C_{n_k}}{\beta x_{i_k}},
\]
which implies $\displaystyle C_{x_{i_k}} \ge  \frac{C_{n_k}}{\beta}$. So in particular $\limsup_i C_{x_{i}} = \infty$. Thus, for each $m\in \mathbb{N}$, there is $j \in \mathbb{N}$ such that $\displaystyle  C^p_{x_m} < \frac{\beta^p-1}{\beta^p} C^p_{x_{m+j}}$; this implies that there is $i\geq m$ such that $\displaystyle \mu( I_{x_i}^{x_{i+1}}) < \frac\ep4 \mu( I_{x_{i+1}}^{\infty})$, for otherwise we would have 
\begin{equation}\label{calcs3}
\begin{split}
C_{x_m}^p &\geq x_m^p\mu(  I_{x_m}^{x_{m+1}}) 
\ge x_m^p \frac\ep4 \mu(  I_{x_{m+1}}^{\infty} ) 
= x_m^p \frac\ep4 [\mu( I_{x_{m+1}  }^{x_{m+2}  }) + \mu( I_{x_{m+2}  }^{\infty}  )]   
\\&\ge x_m^p\frac\ep4\beta^p  \mu(  I_{x_{m+2}  }^{\infty})
 \ge \cdots 
 \ge x_m^p\frac\ep4\beta^{p(j-1)}\mu( I_{x_{m+j}  }^{\infty})
\\&=\frac\ep4\beta^{p(j-1)} \frac{C_{x_{m+j}}^p}{\beta^{jp}} 
= \frac{\beta^p-1}{\beta^p} C^p_{x_{m+j}} \,.
\end{split}
\end{equation}
So, by the above we can choose $m_0\in\mathbb{N}$ such that $\displaystyle C_{x_{m_0}} \ge  \frac{M\beta^2}{(\frac{\epsilon}{4})^{ \frac{2}{p}} (\beta - 1) } $ and such that $\displaystyle \mu( I_{x_k}^{ x_{k+1}})< \frac\ep4 \mu( I_{x_{k+1}}^{\infty})$ for some $k<m_0$;
let $m_1\in \mathbb{N}$ be the largest such $k<m_0$. 
Then by inequalities \eqref{calcs3} we have 
\[
C_{x_{m_1 +1}} \ge \frac{(\beta^p-1)^{\frac{1}{p}}}{\beta} \,C_{x_{m_0}} \ge \frac{M\beta}{(\frac\ep4)^{\frac{1}{p}}(\beta - 1)}\,.
\]
Thus, $C_{x_{m_1+1}}$ satisfies \eqref{ConditionL1}, and hence for $x= x_{m_1+1}$ and $y = x_{m_1}$ we have \eqref{L1Inequality}, 
which combined with the inequality $\displaystyle \mu( I_{x_{m_1}}^{ x_{m_1+1}})< \frac\ep4 \mu(  I_{x_{m_1+1}}^{\infty})$ from the choice of $m_1$, yield
\[\begin{split}
\sup_{\gamma\in S} \mu(s^{-1}  I_{x_{m_1+1}}^{\infty} \cap  I_{0}^{x_{m_1+1}}) 
&\leq 
\sup_{\gamma\in S}\left[\mu\left(s^{-1}  I_{x_{m_1+1}}^{\infty} \cap I_0^{x_{m_1}}\right)+ \mu\left( I_{x_{m_1}}^{x_{m_1+1}}\right)\right]  
\\&\leq 
\frac\ep4 \,\mu( I_{x_{m_1+1}}^{\infty}) + \frac\ep4 \,\mu( I_{x_{m_1+1}}^{\infty})
\,=\, \frac{\ep}{2} \,\mu( I_{x_{m_1+1}}^{\infty}) \,,
\end{split}\]
which contradicts \eqref{JSEquation}. This concludes the proof. \qed

\section{Proof of Theorem \ref{Main}}

This section is devoted to the proof of Theorem \ref{Main}. The main work towards that is the long proof of the following lemma.

\begin{lem}\label{lem:main}
Let $G$ be a topological group that has a finite Kazhdan set $S$. Let $(\Omega,\mu) = ([0,1], {\rm Lebesgue})$, and let $G\act (\Omega,\mu)$ be an ergodic measure preserving action.
For $p>2$, if $f\in L_{p,w}(\Omega,\mu)$ is non-negative and such that $f_\gamma-f \in L_p(\Omega,\mu)$ for all $\gamma\in G$, then $f\in L_p(\Omega,\mu)$.
\end{lem}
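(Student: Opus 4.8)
The plan is to bootstrap the weak-$L_p$ bound into a genuine $L_p$ bound by means of a geometric layer-cake decomposition, in the spirit of the proof of Theorem~\ref{thm:wLp}. Fix $\beta>1$ to be chosen at the very end and, as there, set $x_i=N_0\beta^i$ where $N_0$ is large enough that $t_0:=\mu(I_{N_0}^{\infty})<\tfrac12$; then $t_i:=\mu(I_{x_i}^{\infty})<\tfrac12$ for all $i\ge 0$. Writing $a_i=\mu(I_{x_i}^{x_{i+1}})$, one has $\int f^p\,d\mu\le N_0^p\,\mu(\Omega)+\beta^p\sum_{i\ge 0}x_i^p a_i$, so it suffices to show $\sum_i x_i^p a_i<\infty$, and since $a_i\le t_i$ it is enough to bound $\sum_i x_i^p t_i$. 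The hypothesis $f\in L_{p,w}$ already gives the a priori bound $x_i^p t_i\le \|\,f\,\|_{p,w}^p$ for every $i$; the whole point is to upgrade boundedness of these quantities to summability.

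The engine is the spectral gap estimate \eqref{SpectralGap} applied to the superlevel sets $A=I_{x_{i+1}}^{\infty}$ (legitimate since $\mu(A)=t_{i+1}<\tfrac12$), which — after identifying $\gamma A\setminus A$ with $\{f_\gamma\ge x_{i+1}\}\setminus\{f\ge x_{i+1}\}$ and using symmetry of $S$ — reproduces \eqref{JSEquation} in the form
\[
\ep\, t_{i+1}\ \le\ \sup_{\gamma\in S}\mu\big(\{f_\gamma\ge x_{i+1}\}\cap I_0^{x_{i+1}}\big).
\]
I then split the set on the right at the intermediate level $x_i$, exactly the maneuver behind \eqref{calcs2}: the portion on which $f<x_i$ is contained in $\{|f_\gamma-f|>x_i(\beta-1)\}$ and is therefore controlled by $\lambda_{f_\gamma-f}\big(x_i(\beta-1)\big)$, while the remaining portion lies in the single annulus $I_{x_i}^{x_{i+1}}$. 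Dominating the supremum over the finite set $S$ by the sum yields the recursion
\[
\ep\, t_{i+1}\ \le\ \sum_{\gamma\in S}\lambda_{f_\gamma-f}\big(x_i(\beta-1)\big)\ +\ a_i.
\]

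Next I multiply this by $x_{i+1}^p$ and sum over $i$, which separates into an annulus part and a coboundary part. For the annulus part, $\sum_i x_{i+1}^p a_i=\beta^p\sum_i x_i^p a_i$, and a summation by parts (using $x_{i-1}^p=\beta^{-p}x_i^p$, the boundary term $x_N^p t_{N+1}$ staying bounded by the weak-$L_p$ hypothesis) bounds the annulus contribution by $(\beta^p-1)\sum_i x_i^p t_i+O(1)$. For the coboundary part I set $u_i=x_i(\beta-1)$, so that $x_{i+1}^p=\tfrac{\beta^p}{(\beta-1)^p}u_i^p$; since $\lambda_{f_\gamma-f}$ is nonincreasing and the $u_i$ are geometric, comparing the sum $\sum_i u_i^p\lambda_{f_\gamma-f}(u_i)$ with the integral $\int_0^\infty u^{p-1}\lambda_{f_\gamma-f}(u)\,du=\tfrac1p\|\,f_\gamma-f\,\|_p^p$ of \eqref{FirstLemma} bounds the entire coboundary part by $C(\beta,p)\sum_{\gamma\in S}\|\,f_\gamma-f\,\|_p^p$. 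This quantity is finite \emph{precisely because $S$ is finite} and each $f_\gamma-f\in L_p$; this is the sole place where finiteness of the Kazhdan set, rather than mere property (T), enters, and it explains why Theorem~\ref{thm:wLp} alone only produces a weak-$L_p$ primitive.

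Collecting the two parts gives, uniformly in $N$,
\[
\ep\sum_{i=1}^{N} x_i^p t_i\ \le\ (\beta^p-1)\sum_{i=1}^{N} x_i^p t_i\ +\ C(\beta,p)\sum_{\gamma\in S}\|\,f_\gamma-f\,\|_p^p\ +\ O(1).
\]
The decisive step is to fix $\beta\in\big(1,(1+\ep)^{1/p}\big)$, so that $\beta^p-1<\ep$ and the annulus term is absorbed on the left, leaving $\big(\ep-(\beta^p-1)\big)\sum_{i=1}^{N}x_i^p t_i$ bounded independently of $N$; letting $N\to\infty$ gives $\sum_i x_i^p t_i<\infty$ and hence $f\in L_p(\Omega,\mu)$. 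I expect the main obstacle to be exactly this balancing of $\beta$: the split at level $x_i$ needs $\beta>1$ so that the gap $x_i(\beta-1)$ is wide enough for the coboundary estimate to bite on the ``far'' part, yet $\beta$ must be close enough to $1$ that $\beta^p-1<\ep$ in order to absorb the unavoidable ``near-diagonal'' annulus term $a_i$, which carries no coboundary control at all. A secondary point demanding care is that, since we do not know $\sum_i x_i^p t_i<\infty$ in advance, every summation-by-parts manipulation must be performed with finite partial sums, keeping the boundary terms bounded via the weak-$L_p$ hypothesis.
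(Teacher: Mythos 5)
Your proposal is correct, and it takes a genuinely different and considerably shorter route than the paper's proof. The paper splits $\mathbb{N}$ into the set $\mathbb{N}_1$ of levels where the annulus $A_k - A_{k+1}$ occupies a definite fraction of $A_{k+1}$ and its complement $\mathbb{N}_2$, proves $f_{|_{\mathcal{I}}}\in L_p$ by geometric decay together with a disjointification of the displaced sets $\gamma_iA_{m_i}-A_{m_i-1}$, and handles $f_{|_{\mathcal{J}}}$ via a Calder\'on--Zygmund-type splitting $f=\psi^{(z)}+\zeta^{(z)}$ at every level $z$, integrating the two resulting distributional estimates. You instead iterate a single inequality, $\ep\, t_{i+1}\le\sum_{\gamma\in S}\lambda_{f_\gamma-f}\bigl(x_i(\beta-1)\bigr)+(t_i-t_{i+1})$, which is exactly the spectral-gap-plus-level-splitting move behind \eqref{calcs2}--\eqref{L1Inequality}, but summed with weight $x_{i+1}^p$ over all geometric scales rather than applied at one carefully chosen contradiction-producing scale, and then close by Abel summation and absorption using $\beta^p-1<\ep$. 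I checked the two delicate points: the Abel summation of $\sum_{i}x_{i+1}^p(t_i-t_{i+1})$ yields boundary terms $x_1^pt_0-x_N^pt_N$, the second of which has a favorable sign (so the weak-$L_p$ hypothesis is not actually needed there, though invoking it is harmless), and the comparison $\sum_i u_i^p\lambda_{f_\gamma-f}(u_i)\le u_0^p\mu(\Omega)+\tfrac{1}{1-\beta^{-p}}\|f_\gamma-f\|_p^p$ is valid since $\lambda_{f_\gamma-f}$ is nonincreasing and the $u_i$ are geometric, after which \eqref{FirstLemma} applies because $f_\gamma-f\in L_p\subset L_1$. You also correctly isolate the one place where finiteness of $S$ (rather than mere compactness) is indispensable: dominating $\sup_{\gamma\in S}$ by $\sum_{\gamma\in S}$ so that the cross-scale sum collapses to $\sum_{\gamma\in S}\|f_\gamma-f\|_p^p$. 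What your route buys: it proves the lemma using only $\mu(\{f\ge N_0\})<\tfrac12$ for some $N_0$, so it would in fact bypass Theorem \ref{thm:wLp} as an input to Theorem \ref{Main}. What the paper's route buys is the weak-$L_p$ statement itself, which holds for compact Kazhdan sets and is of independent interest.
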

 
\begin{proof}
We assume $S$ is symmetric, and we choose $0<\ep<1$ so that \eqref{SpectralGap} holds. Choose and fix constants $\delta$ and $\alpha$ such that $0<\delta < \frac{\ep}{8}$ and $1<\alpha^p<1+\delta$. 

For $k\in\bN$ let $A_k = \{\omega\in \Omega: |f(\omega)| \ge \alpha^k \}$. Upon rescaling the function $f$ we may assume without loss of generality that $\mu(A_{1} )< \frac{1}{2}$.

Denote by $\mathbb{N}_1$ the set of $k\in \mathbb{N}$ such that
$\,\mu(A_k  -  A_{k+1})\ge \delta \mu(A_{k+1})\,$,
and $\mathbb{N}_2 = \mathbb{N} - \mathbb{N}_1$.

Let $\mathbb{N}_1^0=\{m\in \mathbb{N}_1\,:\, m-1 \notin \mathbb{N}_1\}$, and for $m\in \mathbb{N}_1^0$ let 
$$\Delta(m) = \min \{l\in \mathbb{N}_2 : l > m \} \,;$$ 
so $k\in \mathbb{N}_1$ for all $m\leq k \leq \Delta(m)-1$. 

For $k\in \bN_2$, observe that 
\begin{equation}\label{eq:ineq-N_2}
\sup_{\gamma\in S} \mu(\gamma A_{k+1} -A_{k}) \ge \frac{7\ep}{8}\mu(A_{k+1}) .
\end{equation}
We claim that if the set $\mathbb{N}_2$ is finite then $f \in L_p(\Omega,\mu)$.
To see this, suppose $N\in \mathbb{N}$ is such that $k\in \mathbb{N}_1$ for every $k\geq N$. Then
\[\begin{split}
\mu(A_N) &= \mu(A_N  -  A_{N+1}) + \mu(A_{N+1}) \ge (1+\delta)  \mu(A_{N+1}) \\&=
(1+\delta) [\mu(A_{N+1}  -  A_{N+2})  + \mu(A_{N+2}) ] 
\\&\ge (1+\delta)^2 \mu (A_{N+2}) \ge \cdots 
\ge (1+\delta)^m \mu(A_{N+m})
\end{split}
\]
for all $m\in \mathbb{N}$.
Thus, it follows
\begin{equation}\label{meas-est-N1}
\begin{split}
\|f_{|_{A_N}}\|_{L_p(\Omega,\mu)}^p &\le \sum_{i=0}^{\infty} \alpha^{p(N+1 + i)} \mu (A_{N+i}  -  A_{N+i +1} ) \\ &\le
\alpha^{p(N+1)} \sum_{i=0}^{\infty} \alpha^{pi} \mu(A_{N+i}) \\&\le 
\alpha^{p(N+1) } \sum_{i=0}^{\infty} \frac{\mu(A_N)}{~(1+\delta)^i~}\, \alpha^{pi} \,<\, \infty \,.
\end{split}
\end{equation}
Since $f_{|_{\Omega   -  A_N}}$ is bounded, the claim follows. Thus, we assume in the following that the set $\mathbb{N}_2$ is infinite.

We divide the set $\Omega$ into the union of two families of disjoint measurable sets 
$$\mathcal{I} : = \bigcup_{m\in \mathbb{N}_1^0}\,(A_{m} - A_{\Delta(m)-1 })
~~~~~~~~~
and
~~~~~~~~~
\mathcal{J} : = \bigcup_{n\in \mathbb{N}_2}\, (A_{n} - A_{n + 1})\,.$$

The proof will be divided into two parts, proving both restrictions of $f$ to $\cI$ and $\cJ$ are in $L_p(\Omega,\mu)$, and that obviously yields the result.

\bigskip

\noindent
{\bf The $\cI$ part:}\ We first prove that the restrction $f_{|_{\cI}} \in L_p(\Omega,\mu)$.
If the set $\mathbb{N}_1^0$ is finite, then $f_{|_{\cI}}$ is bounded since the set $\mathbb{N}_2$ is infinite, and hence there is nothing to prove. So we also assume that $\mathbb{N}_1^0$ is an infinite set and that $m_1 < m_2 < ...$ is an enumeration of $\mathbb{N}_1^0$. 

For $i\in\bN$, noting that $m_i - 1 \not \in \mathbb{N}_1$, we get 
$$\mu(A_{m_i - 1} - A_{m_i}) < \delta \mu(A_{m_i}) <\frac{\ep}{8} \mu(A_{m_i}) \,.$$
On the other hand we know from Lemma \ref{SpectralGap} that 
$$\sup_{\gamma\in S} \mu (\gamma A_{m_i} - A_{m_i}) > \ep \mu(A_{m_i}) \,.$$
The above two inequalities imply
\begin{equation}
\begin{split}
\sup_{\gamma\in S} \mu (\gamma A_{m_i} - A_{m_i - 1}) > \frac{7}{8}\ep \mu(A_{m_i}) \ge \frac{7}{8}\ep \,\sum_{k=i}^{\infty} \mu(A_{m_k} - A_{\Delta(m_{k})})   .
\end{split}
\end{equation}
For each $i\in \bN$ choose $\gamma_i \in S$ that attains the supremum in the above, and let
\[
B_i = \gamma_iA_{m_i} - A_{m_i - 1} .
\]
Now, let $D_0 = \emptyset$ and choose inductively, for each $i\in \bN$ the set $D_i\subset B_i - \cup_{j=1}^{i-1} D_j$ such that
\[
\mu(D_i) > \frac{5\ep}{8} \,\mu(A_{m_i} - A_{\Delta(m_{i})})
\] 
and 
\[
\mu(B_{i+1} - \cup_{j=1}^{i} D_j) \geq  \frac{7\ep}{8} \,\sum_{k=i+1}^{\infty} \mu(A_{m_k} - A_{\Delta(m_{k})}) \,.
\]
Then $D_i$'s are pairwise disjoint and for all $\om\in D_i$,
\[
\sup_{\gamma\in S} \left|\, f_\gamma (\omega) - f(\omega) \,\right| >  \alpha^{m_i} - \alpha^{m_{i}-1} = (\frac{\alpha - 1}{\alpha})\, \alpha^{m_{i}} \,.
\]
Let $\displaystyle c_1 = \sum_{j=1}^{\infty}  \frac{\alpha^{pj}}{~(1+\delta)^{j-1}~}$. Then, calculations similar to \eqref{meas-est-N1} give
\[
\left\|\, f_{|_{A_{m}  - A_{\Delta(m)}}} \,\right\|_p^p 
~\le \,
c_1  \mu(A_{m})\alpha^{pm}
\]
for all $m\in \bN_1$, and therefore
\[\begin{split}
\left\|\, f_{|_{A_{m_i}  - A_{\Delta(m_i)}}} \,\right\|_p^p 
&\le \,
\mu(A_{m_i}-A_{m_i+1})\alpha^{p(m_i+1)} + c_1 \mu(A_{m_i+1})\alpha^{p(m_i+1)} 
\\&\le \frac{\alpha^{p}(1+c_1)}{\delta}\,\mu(A_{m_i}-A_{m_{i+1}})\alpha^{pm_i}
\\&\le c_2\,\mu(A_{m_i}-A_{\Delta(m_i)}) \,\alpha^{pm_i}
\\&\le \,
\frac{c_2\alpha^{p}}{(\alpha-1)^p } \,
\mu\left(A_{m_i}  - A_{\Delta(m_i)}\right)\, \sup_{\gamma\in S} \left|\, f_\gamma (\omega) - f(\omega) \,\right| ^p
\end{split}\]
for all $i\in \bN$ and $\om\in D_i$, where $c_2 = \frac{\alpha^{p}(1+c_1)}{\delta}$. 
This implies
\[\begin{split}
\left\|\, f_{|_{A_{m_i}  - A_{\Delta(m_i)}}} \,\right\|_p^p \mu(D_i)
&\le \,
\frac{c_2\alpha^{p}}{(\alpha-1)^p } \,\mu\left(A_{m_i}  - A_{\Delta(m_i)}\right)\, \sum_{\gamma\in S}\left\|\,f_\gamma - f \,\right\|_p^p
\\&\le \,
c_3 \,
\mu\left(D_i\right)\, \sum_{\gamma\in S}\left\|\,f_\gamma - f \,\right\|_p^p ,
\end{split}\]
where $c_3 = \frac{8c_2\alpha^{p}}{(\alpha-1)^p 5\ep}$. 
Hence, we get
\[\begin{split}
\left\|\, f_{|_\cI} \,\right\|_p^p 
~&= \,
\sum_{i=1}^{\infty} \,
\left\|\, f_{|_{A_{m_i}  - A_{\Delta(m_i)}}} \,\right\|_p^p 
~\le \,
c_3\, \, \sum_{\gamma\in S}\, \left\|\,f_\gamma - f \,\right\|_p^p 
\,< \infty \,.
\end{split}\]
This concludes the proof of the $\cI$ part. 

\bigskip

\noindent
{\bf The $\cJ$ part:}\ Next, we prove $f_{|_{\cJ}} \in L_p(\Omega,\mu)$. 
Note from the definition that $\sum_{t=0}^{\infty } \alpha^{ m_t }  \mathds{1}_{ A_{m_t } - A_{ \Delta ( m_t ) - 1 } } \le f_{|_{\cI}}$, and since $f_{|_{\cI}}\in L_p$, it follows $f\in L_p$ iff $f_{|_{\cJ}}+ \sum_{t=0}^{\infty } \alpha^{ m_t }  \mathds{1}_{ A_{m_t } - A_{ \Delta ( m_t ) - 1 } } \in L_p$. So without loss of generality, we may, and will, assume $\{\om: \alpha^m <f(\om)<\alpha^{m+1}\} = \emptyset$ for all $m\in \bN_1$. 

For $z\in \mathbb{R}_+$, define the functions
\[
 \psi^{(z)}(\om) = \begin{cases}
	f(\omega) + \int_{f\le z} f  & \text{ if } f(\omega) > z  \\ 
	\int_{f  \le z} f & \text{   otherwise } , 
	\end{cases}
\]
	and
\[
\zeta^{(z)} (\omega) = 
\begin{cases}
f(\omega) - \int_{f\le z} f & \text{ if } f \le z  \\
-\int_{f \le z} f  & \text{   otherwise } . 
\end{cases}
\]
Observe that for all $z\in \bR_+$, $f = \psi^{(z)} + \zeta^{(z)}$. Our strategy is to handle $f_\gamma-f$ by analyzing the functions $\psi_\gamma^{(z)} - \psi^{(z)}$ and $\zeta_\gamma^{(z)}  - \zeta^{(z)}$ separately.

By definitions, $\{\om : \psi^{(z)}(\om) > z \}= \{\om : f(\om) > z \}$. Moreover, since $f\in L_1(\Omega, \mu)$, it follows $\mu(\{\om : \psi^{(z)}(\om) > z \}) = \mu(\{\om : \psi^{(z)}(\om) \ge z \})$ for almost every $z\in \bR_+$. Recall that we have rescaled $f$ so that $\mu(\{\om : f(\om) > \alpha \}) < \frac12$. Hence, using \eqref{SpectralGap} we get
\begin{equation}\label{ineq:psi}
\begin{split}
\sup_\gamma \|\psi_\gamma^{(z)} - \psi^{(z)}\|_1 
&\geq
\sup_\gamma \int_{\{\om : f(\om) > z ,\, f_\gamma(\om) \leq z\}} f(\omega)\,d\om
\\&\geq
z  \sup_\gamma \mu (\{\om : f(\om) > z ,\, f_\gamma(\om) \leq z\} ) 
\\&\geq
z\ep\,\mu( \{\om : f(\om) > z \} ) = z\ep \lambda_{\psi^{(z)}} (z)
\end{split}
\end{equation}
for a.e. $z\in [\alpha, \infty)$. Also, since $\zeta^{(z)}$ has zero average, by property (T) we have
\begin{equation}\label{ineq:zeta}
\lambda_{\zeta^{(z)}}(z)\le z^{-2p} \|\zeta^{(z)}\|_{2p}^{2p} \le z^{-2p} c_1 \sup_{\gamma\in S} \|\zeta_\gamma^{(z)}  - \zeta^{(z)}\|_{2p}^{2p} 
\end{equation}
for some constant $c_1$ and all $z\in \bR_+$. Using \eqref{FirstLemma}, \eqref{ineq:psi}, \eqref{ineq:zeta}, and the fact that  $\int_{0}^{\alpha} z^{ p-1 } \lambda_{\phi} (z) dz<\infty$ for any measurable function $\phi$, we get 
\begin{equation*}\label{eq:A5}
\begin{split}
\|f\|_p^p &= 
p2^p \int_{0}^{ \infty } z^{ p-1 } \lambda_{ f } (2z)\, dz 
\le 
p2^p \int_{0}^{ \infty } z^{ p-1 } \left(\lambda_{\psi^{(z)}} (z) + \lambda_{\zeta^{(z)}} (z)\right) dz
\\ &\le  
c\, \sum_{\gamma\in S}\, \big(\,{\underbrace{\int_\alpha^{\infty} z^{p-2} \|\psi_\gamma^{(z)} - \psi^{(z)} \|_1 \,dz}_{\textcircled{{\tiny I}}}} \,+ 
\,{\underbrace{\int_\alpha^{\infty} z^{-p-1} \|\zeta_\gamma^{(z)} - \zeta^{(z)}\|_{2p}^{2p} \,dz}_{\textcircled{{\tiny II}}}} \,\big) 
 \end{split}
\end{equation*}
for some constant $c$. In the remaining we prove the sum over $\gamma\in S$ of each of the above two integrals is finite.


\subsection{Estimate I}
In this section we prove that the sum over $\gamma\in S$ of the integral \textcircled{{\tiny I}} is finite. For this, we prove that there is a constant $c>0$ such that 
\begin{equation}\label{eq:A6}
\sum_{\gamma\in S} \| \psi_\gamma^{(z)} - \psi^{(z)} \|_1
\le 
c\, \sum_{\gamma\in S}\int_{|f_\gamma - f| \ge az} |(f_\gamma - f)(\omega)|  d\omega 
\end{equation}
for almost every $z\in [\alpha, \infty)$, where $a = \min\{\frac{\ep}{24 |S|}, \frac{\alpha-1}{\alpha}\}$. Then, using \eqref{eq:A6} and Fubini's Theorem we conclude
\[
\begin{split}
\sum_{\gamma \in S} \int_{\alpha}^{\infty} z^{p-2} \|\psi_{\gamma}^{(z)} - \psi^{(z)} \|_1 dz
&\le
c\,\sum_{\gamma \in S} \int_0^{\infty}  z^{p-2}\int_{|f_\gamma - f| \ge az} |(f_\gamma - f)(\omega)| \,d\om \,dz
\\ &= 
c\,\sum_{\gamma \in S} \int_{\Omega} |(f_\gamma - f)(\om)| \int_0^{\frac{1}{a} |(f_\gamma - f)(\om)|} z^{p-2}\, dz \, d\om
\\ &= 
c\, \sum_{\gamma \in S}
\frac{1}{(p-1)a^{p-1}} \int_{\Omega}| (f_\gamma - f  )(\om) |^p \,d\om < \infty .
\end{split}
\]
Now, we proceed to prove \eqref{eq:A6}. For each $z\in [\alpha, \infty)$ and $\gamma\in S$, define the sets 
\[
P_{\gamma,z} : = \{\om \in \Omega : |f_\gamma (\om)- f (\om) | = |\psi_\gamma^{(z)} (\om) - \psi^{(z)} (\om) | \not= 0\},
\]
\[
Q_{\gamma,z} : = \{\omega \in \Omega - P_{\gamma,z} :  |\psi_\gamma^{(z)} (\omega) - \psi^{(z)} (\omega) | \not= 0\} .
\]

\noindent
\pmb{Case 1: $\displaystyle\sum_{\gamma\in S} \int_{P_{\gamma,z} } |\psi_\gamma^{(z)} - \psi^{(z)}|d\omega \ge 
\sum_{\gamma\in S}  
\int_{Q_{\gamma,z} } |\psi_\gamma^{(z)} - \psi^{(z)}|d\omega$.}\\
For $\gamma\in S$, $k\in \mathbb{N}$, and $z \in [\alpha^k , \alpha^{k+1})$, we have $$\mu(P_{\gamma,z}) \le 3 \mu (A_k) \le 3(1+\delta)\mu (A_{k+1}) \le 6 (A_{k+1})$$ if $k\in \mathbb{N}_2$, and we have $$\mu(P_{\gamma,z})  \le 3 \mu(\{\omega : f (\om) > z \}) = 3\mu(A_{k+1})$$ if $k\in \mathbb{N}_1$, since in this case $(\alpha^k, \alpha^{k+1})\cap {\rm Range}(f)= \emptyset$.
Thus, 
\[
\begin{split}
&\sum_{\gamma\in S} \int_{ \{\omega : |f_\gamma (\om) - f (\om)|\ge az\}} |f_\gamma (\om)- f (\om) | d\omega 
\\\ge &\,
\sum_{\gamma\in S} \int_{ P_{\gamma,z} \cap \{\omega : |f_\gamma (\om) - f (\om)|\ge az\}} |f_\gamma (\om) - f (\om) | d\omega
\\ =&\,
\sum_{\gamma\in S} \int_{ P_{\gamma,z}} |f_\gamma (\om) - f (\om) | d\omega
-
\sum_{\gamma\in S} \int_{ P_{\gamma,z} \cap \{\omega : |f_\gamma - f|(\omega)< az\}} |f_\gamma (\om) - f (\om) | d\omega
\\\ge&~
\frac{1}{2} \sum_{\gamma\in S}  \|\psi_\gamma^{(z)} - \psi^{(z)}\|_1 
-
az \sum_{\gamma\in S}  \mu(P_{\gamma,z}) 
\\\ge&~
\frac{1}{2} \sum_{\gamma\in S} \|\psi_\gamma^{(z)} - \psi^{(z)}\|_1 - 6az |S| \mu(A_{k+1})
\\\underset{(*)}{\ge}&~
\frac{1}{2} \sum_{\gamma\in S} \|\psi_\gamma^{(z)} - \psi^{(z)}\|_1 - 6az |S| \frac{1}{z\ep} \sum_{\gamma\in S} \|\psi_\gamma^{(z)} - \psi^{(z)}\|_1
\\\ge&~
\frac{1}{4} \sum_{\gamma\in S} \|\psi_\gamma^{(z)} - \psi^{(z)}\|_1 
\end{split}
\]
for a.e. $z\in [\alpha^k , \alpha^{k+1})$, where we used \eqref{ineq:psi} for inequality $(*)$.

\bigskip

\noindent
\pmb{Case 2: $\displaystyle \sum_{\gamma\in S} \int_{P_{\gamma,z} } |\psi_\gamma^{(z)} - \psi^{(z)}|d\omega < \sum_{\gamma\in S} \int_{Q_{\gamma,z} } |\psi_\gamma^{(z)} - \psi^{(z)}|d\omega$.}\\
Note that from the definition of the functions $\psi^{(z)}$, for $\om \in Q_{\gamma,z}$, $\gamma\in S$ and $z\in [\alpha , \infty)$ we have $|\psi_\gamma^{(z)}(\om) - \psi^{(z)}(\om)| = \max \{ f_\gamma (\omega), f (\omega)\}$. Define the sets
\[Q_{\gamma,z}^+ = \left\{\omega \in Q_{\gamma,z} : |f_\gamma(\om) - f(\om)| \ge (1-\frac{1}{\alpha}) \max \{ f_\gamma (\omega), f (\omega)\}\right\} ,\] 
and $Q_{\gamma,z}^- = Q_{\gamma,z} - Q_{\gamma,z}^+$ for $\gamma\in S$ and $z\in [\alpha , \infty)$. Then
\[
\begin{split}
\sum_{\gamma\in S} \int_{ Q_{\gamma,z}^+ } |\psi_\gamma^{(z)} - \psi^{(z)}| d\omega
&\le
(\frac{\alpha}{\alpha-1}) \sum_{\gamma\in S}  \int_{ Q_{\gamma,z}^+ } |f_\gamma - f| d\omega
\\&\underset{(*)}{\le}
(\frac{\alpha}{\alpha-1}) \sum_{\gamma\in S} \int_{|f_\gamma - f| \ge (1-\frac{1}{\alpha}) z} |f_\gamma - f| d\omega
\\&\le
(\frac{\alpha}{\alpha-1}) \sum_{\gamma\in S} \int_{|f_\gamma - f| \ge a z } |f_\gamma - f| d\omega ,
\end{split}
\]
where for inequality $(*)$ we used the fact that $z\leq \max \{ f_\gamma (\omega), f (\omega)\}$ on $Q_{\gamma,z}$.

On the other hand, it is seen from definitions that if $\alpha^k \le z < \alpha^{k+1}$ for some $k\in\mathbb{N}_1$, then $Q_{\gamma,z}^- = \emptyset$, and there is no more cases left to consider. Let $\alpha^k \le z < \alpha^{k+1}$ for some $k\in\mathbb{N}_2$. If $k-1\in\mathbb{N}_1$, then $Q_{\gamma,z}^- \subseteq A_k-A_{k+2}$, and so $\mu(Q_{\gamma,z}^-)\le (1+\delta) \mu (A_{k+1})$; if $k-1\in\mathbb{N}_2$, then $Q_{\gamma,z}^- \subseteq A_{k-1}-A_{k+2}$, and so $\mu(Q_{\gamma,z}^-)\le (1+\delta)^2 \mu (A_{k+1})$. Thus, in either case, by \eqref{eq:ineq-N_2}, we get for every $\gamma\in S$,
\[
\mu(Q_{\gamma,z}^-)\le \frac{8}{7\ep}(1+\delta)^2 \sup_{\gamma'\in S} \mu(\gamma' A_{k+1} - A_{k}) ,
\]
and therefore
\[
\begin{split}
&\int_{Q_{\gamma,z}^- } |(\psi_{\gamma}^{(z)}(\om) - \psi^{(z)} (\om) | d\om \le 
\frac{8}{7\ep}(1+\delta)^2 \alpha^{k+2} \sup_{\gamma'\in S} \mu(\gamma' A_{k+1} - A_{k})
\\ \le & \,
\left(\frac{8 \alpha^{2} (1+\delta)^2} {7\ep(1-\frac{1}{\alpha})}\right) \,\sup_{\gamma'\in S} \int_{| f_{\gamma'} - f | \ge az}|( f_{\gamma'} - f)(\omega) |  d \omega ,
\end{split}
\]
where the last inequality follows from the facts that $$\gamma' A_{k+1} - A_{k}\subseteq \{\om: | f_{\gamma'}(\om) - f (\om)| \ge \alpha^{k+1}(1-\frac{1}{\alpha})\}$$ for all $\gamma'\in S$, and that $ az < \alpha^{k+1}(1 - \frac{1}{\alpha}) $.

Combining the above, we get
\[
\begin{split}
&\sum_{\gamma\in S} \|\psi_\gamma^{(z)} - \psi^{(z)} \|_1
\le
2 \sum_{\gamma\in S} \int_{Q_{\gamma,z} } |(\psi_{\gamma}^{(z)}(\om) - \psi^{(z)} (\om) |d\omega
\\ =~& ~
2 \sum_{\gamma\in S} \int_{Q_{\gamma,z}^+ } |(\psi_{\gamma}^{(z)}(\om) - \psi^{(z)} (\om) |d\omega
 +  
2 \sum_{\gamma\in S} \int_{Q_{\gamma,z}^- } |(\psi_{\gamma}^{(z)}(\om) - \psi^{(z)} (\om) |d\omega
\\ \le~& ~
c\, \sum_{\gamma\in S} \int_{| f_\gamma - f | \ge az}|f_\gamma(\omega) - f(\omega) |  d \omega
\end{split}
\]
for some constant $c$. This concludes the proof of finiteness of the integral \textcircled{{\tiny I}}.


\subsection{Estimate II}
In this last section we prove that the integral \textcircled{{\tiny II}} is finite.

For $z\in [\alpha, \infty)$ and $\gamma\in S$, let 
\[E_{\gamma, z} := \big\{\omega \in \Omega : |\zeta_\gamma^{(z)}(\om) - \zeta^{(z)}(\om)| \leq \frac{1}{\alpha-1} |f_{\gamma}(\om) - f(\om)| \big\} .\]
Then
\[\begin{split}
&\int_{\alpha}^{\infty} z^{-p-1} \int_{E_{\gamma, z}\cap \{\om: |f_{\gamma}(\om) - f (\om)|\le 2z\} } | \zeta_{\gamma}^{(z)}(\om) - \zeta^{(z)} (\omega)  |^{2p}d\omega dz 
\\ \le&~
\frac{1}{\alpha-1} \int_0^{\infty} z^{-p-1} \int_{\{\om: |f_{\gamma}(\om) - f (\om)|\le 2z\}} | f_{\gamma}(\om) - f(\omega)  |^{2p}d\omega dz 
\\ =&~
\frac{1}{\alpha-1} \int_{\Omega} |f_{\gamma}(\om) - f (\om)|^{2p} 
(    \int_{ \frac{| ( f_{\gamma} - f)(\omega) |}{2} }^{\infty} z^{-p-1} 
dz)
d\omega
\\=& ~
\frac{2^{p}}{p(\alpha-1)}\, \int_{\Omega} | ( f_{\gamma} - f)(\omega) |^p d\omega < \infty \,
\end{split}\]
for every $\gamma\in S$.
On the other hand,
\[\begin{split}
&\int_0^{\infty} z^{-p-1} \int_{E_{\gamma, z}\cap \{\om: |f_{\gamma}(\om) - f (\om)| > 2z\} } | (\zeta_{\gamma}^{(z)} - \zeta^{(z)} )(\omega)  |^{2p}d\omega dz 
\\ \le ~&\int_0^{\infty} z^{-p-1} \int_{\{\om: |f_{\gamma}(\om) - f (\om)| > 2z\} } (2z)^{2p} d\omega dz 
\\ = ~&\int_0^{\infty}  z^{-p-1} (2z)^{2p} \lambda_{|f_{\gamma} - f|  } (2z)dz  
\\ = ~& 2^{p+1} \int_0^{\infty} (2z)^{p-1} \lambda_{|f_{\gamma} - f|  } (2z)dz
\\ \overset{(*)}{=} \ &\frac{2^{p}}{p} \|f_{\gamma} -f \|_{p} < \infty 
\end{split}\]
for every $\gamma\in S$, where $(*)$ follows from \eqref{FirstLemma}.
Hence 
\begin{equation}\label{eq:p14}
\sum_{\gamma\in S} \int_{\alpha}^{\infty} z^{-p-1} \int_{E_{\gamma, z}} | (\zeta_{\gamma}^{(z)} - \zeta^{(z)} )(\omega)  |^{2p}\, d\om\, dz < \infty .
\end{equation}
Next we turn our attention to the complement sets $E_{\gamma, z}^c$. The estimates will be similar to the last case in the previous section.

One can see from the definitions that if $\alpha^k\le z < \alpha^{k+1}$ for some $k\in\mathbb{N}_1$, then $E_{\gamma, z}^c = \emptyset$ for any $\gamma\in S$. Thus, we assume $k\in\mathbb{N}_2$. If $k-1\in\mathbb{N}_1$, we have $E_{\gamma, z}^c\subset (A_k-A_{k+2})\cup s(A_k-A_{k+2})$, and so $\mu(E_{\gamma, z}^c)\le 2(1+\delta) \mu(A_{k+1})$; if $k-1\in\mathbb{N}_2$, then $E_{\gamma, z}^c\subset (A_{k-1}-A_{k+2}) \cup s(A_{k-1}-A_{k+2})$, and so $\mu(E_{\gamma, z}^c)\le 2(1+\delta)^2 \mu (A_{k+1})$, for every $\gamma\in S$. Thus, in either case, by \eqref{eq:ineq-N_2}, we get for every $\gamma\in S$,
\[
\mu(E_{\gamma, z}^c)\le \frac{16}{7\ep}(1+\delta)^2 \sup_{\gamma'\in S} \mu(\gamma' A_{k+1} - A_{k}) ,
\]
and therefore,
\[\begin{split}
\int_{E_{\gamma, z}^c} |(\zeta_{\gamma}^{(z)} - \zeta^{(z)})(\omega)|^{2p} d\omega 
\le 
(2z)^{2p}\frac{16}{7\ep}(1+\delta)^2 \sup_{\gamma'\in S} \mu(\gamma' A_{k+1} - A_{k}) .
\end{split}\]
Since $\gamma' A_{k+1}-A_{k}\subseteq \{\om : |(f_{\gamma'} - f) (\om)|>z(1-\frac{1}{\alpha})\}$ for all $\gamma'\in S$, 
using \eqref{FirstLemma} we get
\[\begin{split}
&~\sum_\gamma \, \int_{\alpha}^{\infty} z^{-p-1} \int_{E_{\gamma, z}^c} |(\zeta_{\gamma}^{(z)} - \zeta^{(z)} )(\omega)|^{2p}d\omega dz 
\\ {\le}  \, &~
\frac{16}{7\ep}(1+\delta)^2 \sum_\gamma \, \int_{\alpha}^{\infty} z^{-p-1} (2z)^{2p} \lambda_{|f_{\gamma} - f| } (z(1-\frac{1}{\alpha})) dz 
\\ = &~
\frac{16}{7\ep}(1+\delta)^2 2^{2p} (\frac{\alpha}{\alpha-1})^p\sum_\gamma \, \int_{\alpha}^{\infty} u^{p-1} \lambda_{|f_{\gamma} - f| } (u) du 
\\ = &~
\tilde c \sum_\gamma \, \|f_{\gamma}-f\|_{p} < \infty ,
\end{split}\]
where $\tilde c$ is a constant; this, together with \eqref{eq:p14} concludes the proof of the finiteness of the integral \textcircled{{\tiny II}}, and therefore completes the proof of the lemma.
\end{proof}

\bigskip

\noindent
{\it Proof of Theorem \ref{Main}.}\
Let $G$ be a topological group that has a finite Kazhdan set. Let $\Omega$ be a standard Borel space, $\mu$ a finite measure on $\Omega$, and $\pi: G \act L_p(\Omega,\mu)$ a linear isometric action whose BL action $G \act L_p(\Omega,\mu)$ is measure preserving. 
By ergodicity, if $\mu$ has atoms then $\Omega$ is a finite set, and the action $G \act L_p(\Omega,\mu)$ factors through a finite quotient of $G$, hence $H^1(G, \pi)=0$.

So, we may assume $(\Omega,\mu) = ([0,1], {\rm Lebesgue})$. By \cite[Corollary 2]{MarSal} the claim holds for any $p\in [1, 2]$. Thus, we assume $p>2$. Let $b: G\to L_p(\Omega,\mu)$ be a $\pi$-cocycle. Since $L_p(\Omega,\mu)\subset L_2(\Omega,\mu)$, we may consider $b$ as a cocycle for the corresponding action $\pi^2:G \act L_2(\Omega,\mu)$. Since $G$ has property (T), there is $f\in L_2(\Omega,\mu)$ such that $b_\gamma = f_\gamma - f$ for all $\gamma\in G$. Since $\big| |f_\gamma| - |f| \big| \leq |f_\gamma - f| =|b_\gamma| \in L_p(\Omega,\mu)$, Lemma \ref{lem:main} implies $|f| \in L_p(\Omega,\mu)$, which completes the proof. \qed

\end{document}